\newcommand{\R}{\mathbb R}
  \newcommand{\E}{\mathbb E}
\newtheorem{theorem}{Theorem}[section]
 \newtheorem{remark}[theorem]{Remark}
\newtheorem{corollary}[theorem]{Corollary}
\newtheorem{definition}[theorem]{Definition}
\begin{document}
\title{  Probabilistic  representation for mild solution of the Navier-Stokes equations }

\author{ C. Olivera\thanks{Departamento de Matem\'{a}tica, Universidade Estadual de
Campinas, Brazil. E-mail: \textsl{colivera@ime.unicamp.br}. }}

\date{}

\maketitle

\textit{Key words and phrases. Navier-Stokes equations,  
Stochastic differential equation, Iyer-Cosntantin representation formula,  Mild Solution.}

%\vspace{0.3cm} \noindent {\bf MSC2000 subject classification:} 60H10
%, 35F15,  60H25, 60H15  .

%
%%%%%%%%%%%%%%%%%%%%%%%%%%%%%%%%%%%%%%%%%%%%%%%%%%%%%%%%%%%%%%%%%%%%%%%%%%%%
\begin{abstract}
This paper is based on a formulation of the Navier-Stokes equations developed by Iyer and  Constantin \cite{Cont} , where the velocity field of a viscous incompressible 
fluid is written as the expected value of a stochastic process. Our contribution  is to establish this  probabilistic representation formula for mild solutions of  the Navier-Stokes equations on 
$\R^{d} $.  

 \end{abstract}
%%%%%%%%%%%%%%%%%%%%%%%%%%%%%%%%%%%%%%%%%%%%%%%%%%%%%%%%%%%%%%%%%%%%%%%%%%%%
%
\maketitle

%%%%%%%%%%%%%%%%%%%%%%%%%%%%%%%%%%%%%%%%%%%%%%%%%%%%
\section {Introduction} \label{Intro}
%%%%%%%%%%%%%%%%%%%%%%%%%%%%%%%%%%%%%%%%%%%%%%%%%%%%

We consider the classical Cauchy problem for the Navier-Stokes system,
describing the evolution of a velocity field
$u$ of an incompressible fluid with kinematic
viscosity $\nu$

\begin{equation}\label{Navier}
 \left \{
\begin{aligned}
    &\partial_t u(t, x) =\nu  \Delta u(t, x) - [u(t, x)\cdot \nabla] u(t, x)  - \nabla \pi(t, x)
    \\& \text{ div } u(t, x)=0
    \\    &u(0,x)=  u_{0}(x)
\end{aligned}
\right .
\end{equation}

The unknown quantities are the velocity $u(t, x) = (u_{1}(t, x), . . . , u_{d}(t, x))$ of
the fluid element at time $t$ and position $x$ and the pressure $\pi(t, x)$. Such equations always attract the attention of
many researchers, with an enormous quantity of publications in the literature. Concerning
classical results about (1.1), we refer to the book by Teman \cite{Tema}.
In the 1960s, mild solutions were first constructed by Kato and Fujita (\cite{Kato3} and 
\cite{Kato2}) that are continuous in time and take values in the Sobolev spaces
$u\in C([0,T], H^{s}(\R^{d}))$ ($s\geq  \frac{d}{2}-1$). Results on the
existence of mild solutions with value in $L^{p}$, were established 
by Fabes, Jones and Riviere \cite{Fabes} and by  Giga \cite{Giga2}.
 In 1992, a modern treatment for mild solutions in $H^{s}$  was given by Chemin \cite{Che}. 
For recent developments see   Lemarie-Rieusset \cite{Lema}. 
One of the (still open) million dollar problems posed by the Clay Institute  is to show that given a smooth initial data $u_0$ the
solution to  (\ref{Navier}) in three dimensions remains smooth for all time. We are interested
in developing probabilistic techniques, that could help solve this problem

\bigskip

Probabilistic representations of solutions of partial differential equations as the
expected value of functionals of stochastic processes date back to the work of Einstein,
Feynman, Kac, and Kolmogorov in physics and mathematics. The Feynman-Kac
formula is the most well-known example, which has provided a link between
linear parabolic partial differential equations and probability theory, see \cite{Kara}. 
In 2008 Constantin and Iyer \cite{Cont}(see also \cite{Cont2} and \cite{Iyer}) established
a probabilistic Lagrangian representation formula by making use of stochastic flows. They show that 
$u$ is  classical solution to the Navier-Stokes equation (\ref{Navier}) if an only if $u$ satisfies the stochastic systems

\begin{equation}\label{itoass}
X_{t}(x)= x + \int_{0}^{t}   u(r, X_{r}(x)) \ dr  +  B_{t},
\end{equation}

\begin{equation}\label{repre}
u(t,x)= \mathbb{P}  \E[   (\nabla X_{t}^{-1})^{\ast}  u_{0}(X_{t}^{-1}) ]
\end{equation}

where $B_{t}$ denoting the standard Brownian motion,  $\mathbb{P}$ is the Leray-Hodge projection and $\ast$ denotes the transposition of matrix.  We mention that Fang, D Luo \cite{Fang}  obtained formula  (\ref{repre})  on  a compact manifolds,  Rezakhanlou \cite{Reza} wrote
 the representation (\ref{repre})   in the context of symplectic  geometry and Zhang \cite{Zhang3}
extended  the formula (\ref{repre}) for non-local operators.  Different probabilistic representations of the solution of the the Navier-Stokes equations were studied by S. Albeverio, Y. Belopolskaya \cite{Albe}, Busnello \cite{Bu},  Busnello, Flandoli, Romito \cite{Bu2}  Cipriano, Cruzeiro \cite{Cip}, Cruzeiro, Shamarova \cite{Cru} and  Zhang  \cite{Zhang3}. 

\bigskip

 Strong solutions to  the equation (\ref{itoass}) are known for irregular $u$
, the best result (after previous investigations of Zvonkin \cite{Zo}, Veretennikov\cite{Ve}, among others) being proved by
Krylov, R\"{o}ckner in \cite{Krylov}.  More recently Flandoli, Gubinelli, Priola,  see \cite{FGP2} and \cite{FGP}, 
proved that   if the drift term is  H\"{o}lder  continuous then 
  $x \rightarrow X_{s,t} $ is a $C^{1}$- stochastic flow. 

\bigskip The contribution of this paper is to show that 
the unique mild solution of the equation (\ref{Navier})  with values in $ C([0,T], H^{s}(\R^{d}))$ has the stochastic representation (\ref{repre}). The proof is simple and it is  based in stability properties of the mild solution 
and in the  flow properties  associated to the equation (\ref{itoass}). The result is the following theorem.

\bigskip
In fact, through of this paper, we fix a stochastic basis with a
$d$-dimensional Brownian motion $\big( \Omega, \mathcal{F}, \{
\mathcal{F}_t: t \in [0,T] \}, \mathbb{P}, (B_{t}) \big)$.
We denoted $M$ a  generic constant.

\section{Preliminaries} \label{Intro}

\subsection{Mild Solution.}

In this subsection we  recall some results on the Stokes operator. 

\[
L_{\sigma}^{2}= \  the  \ closure  \ in \ [L^{2}(\R^{d})]^d  \ of  \{ u\in [C_{0}^{\infty}(\R^{d})]^d, \ div u=0\}
\]
and
\[
G^{2}= \{ \nabla q,   q\in W^{1,2}(\R) \}.
\]

\bigskip

We then have the following Helmholtz decomposition 
\[
[L^{2}(\R)]^d= L_{\sigma}^{2} \oplus G^{2}, 
\]
 the sum above reduces to the orthogonal decomposition and $L_{\sigma}^2$
is a separable Hilbert space, whose scalar product is denoted by $(\cdot,\cdot)$.

\bigskip
Let $\mathbb{P}$ be the continuous projection  from $L^{2}(\R^{d})$ to  $L_{\sigma}^{2}$
associated with this decomposition  and let $\Delta$ be
the Laplace operator. Now, we define the Stokes operator $A$ in $L_{\sigma}^{2}$ by $A=-\mathbb{P}\Delta$.  The operator $-A$ generates  a bounded analytic semigroup  $\{S(t)\}_{t\ge 0}$, see \cite{Soh}.

\bigskip
The potential space $H^{s}(\R^{d})$ is defined as the space 
$(\mathit{I}-\Delta)^{-s/2}L^{2}$ equipped with the norm 
$\| f\|_{H^{s}}:=\| (\mathit{I}-\Delta)^{s/2}\|_{L^{2}}$. It is well know that

\[
\| fg\|_{H^{s}}\leq  M_{s} \  \| f\|_{H^{s}} \ \| g\|_{H^{s}} \ \ \ \  if \  s>\frac{d}{2}
\]

We also  recall that

\begin{equation}\label{stimaSpr}
\| S(t) u \|_{H^{s}}  \le  \  M_{s} \| u \|_{H^{s}} \  
\end{equation}

\begin{equation}\label{stimaASp}
\|A^{\alpha} S(t) u \|_{H^{s}}\le \  \frac{M_{s}}{t^{\alpha}} \ \| u \|_{H^{s}} .
\end{equation}

 We consider  the Navier -Stokes initial problem  in the space
$H^{s}(\R^{d})$. Applying the projection operator $\mathbb{P}$ to \eqref{Navier}
we get rid of the pressure term; setting $\nu=1$, equation \eqref{Navier} becomes
\begin{equation}\label{eq-abst}
\begin{cases}
du(t) + Au(t)\ dt= B(u(t)) \ dt, & t>0 
\\ u(0)=u_{0}
\end{cases}
\end{equation}
where the non linear term $B$ is defined by $B(u)=-\mathbb{P}[(u\cdot \nabla)u]$. 
Since $u$ is divergence free, we also have the representation
$B(u)= -\mathbb{P}[ \text{div}\ (u\otimes u )]$ which will be useful later on.
We consider the mild solution

\begin{definition}\label{defimild}  A measurable function $u: [0,T]\rightarrow  H^{s}(\R^{d})$
is a mild solution of the equation (\ref{eq-abst}) if

\begin{enumerate}
\item $u \in C(0,T; H^{s}(\R^{d}))$, 

\item  for all $t \in (0,T]$, we have
\begin{equation}\label{mild}
u(t)= S(t)u_{0}+  \ \int_{0}^{t} S(t-s)B(u(s)) \  ds 
 \end{equation}
\end{enumerate}

\end{definition}

\begin{definition} We assume that there exist $T>0$ such that satisfies $u$ satisfies items 1 and 2  in  definition  \ref{defimild}. Then 
we called $u$ of local mild solution. 
\end{definition}

\subsection{Stochastic Flows.}

In this subsection we follow the seminar paper by Flandoli, Gubinelli and Priola in \cite{FGP2}.  We consider the SDE

\begin{equation}
\label{sde3}
dX_{t} (x)= b(t,X_{s,t}(x) ) dt +  d B_{t}, \ X_{s}=x\in\R^{d},
\end{equation}

where $X_{s,t}(x)= X(s,t,x)$, also $X_{t}(x)= X(0,t,x)$. Moreover, the inverse $Y_{s,t}(x):=X_{s,t}^{-1}(x)$ satisfies the
following backward stochastic differential equation%

\begin{equation}
\label{itoassBac}Y_{s,t}(x)= x - \int_{s}^{t} b(r,  Y_{r,t}(x)) \ dr - (B_{t}-B_{s}). 
\end{equation}

We denote  by $\phi_{s,t}$ the flow associated to $X_{s,t}$ and $\psi_{s,t}$ its inverse.
Let $T> 0 $ be be fixed. For any $\alpha\in (0,1)$,  we denoted by $L^{\infty}([0,T],C_{b}^{\alpha}(\R^{d}))$ the space bounded  Borel functions  $f : [0,T]\times \R^{d}\rightarrow \R$ such that

\begin{align*}
\|f\|_{\alpha,T}:=  \sup_{t\in [0,T]}  \sup_{x\neq y, |x-y|\leq 1} \frac{|f(t,x)-f(t,y)|}{|x-y|^\theta} < \infty\,.
\end{align*}

We  also recall   the important results  in \cite{FGP2}.

\begin{theorem}\label{difH} We assume that $b \in   L^{\infty}([0,\infty),C_{b}^{\alpha}(\R^{d}))$. Then 

\begin{itemize}
\item[a)] There exists a unique solution of the SDE (\ref{sde3}). 

\item[b)]
 There exists a stochastic flow $\phi_{s,t}$ of diffeomorphisms associated to  equation
(\ref{sde3}). The flow is the class $C^{1+\alpha^{\prime}}$ for every  $0<\alpha^{\prime}< \alpha$.

\item[c)] Let $b^{n}\in L^{\infty}([0,\infty),C_{b}^{\alpha}(\R^{d}))$ be a sequence of the vector field and 
$\phi^{n}$ be the corresponding stochastic flow. If $b^{n}\rightarrow b$ in $L^{\infty}([0,\infty),C_{b}^{\alpha}(\R^{d}))$, then 
for any $p\geq 1$ we have 

\begin{equation}\label{es1}
\lim_{n\rightarrow \infty} \sup_{x\in \R^{d}} \sup_{s\in[0,T]} \E [\sup_{r\in[s,T]}|\phi^{n}_{s,r}-\phi_{s,r}|^{p}]=0,
\end{equation}

\begin{equation}\label{es2}
 \sup_{n}\sup_{x\in \R^{d}} \sup_{s\in[0,T]} \E [\sup_{r\in[s,T]}|D\phi_{s,r}^{n}|^{p}]< \infty,
\end{equation}

\begin{equation}\label{es3}
\lim_{n\rightarrow \infty} \sup_{x\in \R^{d}} \sup_{s\in[0,T]} \E [\sup_{r\in[s,T]}|D\phi_{s,r}^{n}-D\phi_{s,t}|^{p}]=0.
\end{equation}
\end{itemize} 
\end{theorem}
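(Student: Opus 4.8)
The plan is to obtain (a), (b) and (c) at once through the It\^o--Tanaka (Zvonkin) transformation, which trades the merely H\"older drift $b$ for $C^{1+\alpha}$ coefficients; items (a) and (b) then reduce to classical strong well-posedness and to Kunita's theory of stochastic flows, and item (c) to Gr\"onwall-type stability estimates. First I would fix $\lambda>0$ and solve, componentwise on $[0,T]\times\R^{d}$, the backward parabolic system
\begin{equation*}
\partial_t U+\tfrac12\Delta U+(b\cdot\nabla)U-\lambda U=-\,b,\qquad U(T,\cdot)=0 .
\end{equation*}
Written in mild form with the heat semigroup $(P_t)$ and solved by a contraction argument, Schauder-type estimates for $(P_t)$ and the hypothesis $b\in L^{\infty}([0,T],C_b^{\alpha})$ should produce a unique $U\in L^{\infty}([0,T],C_b^{2+\alpha'})$ with $\partial_tU\in L^{\infty}([0,T],C_b^{\alpha'})$; moreover, since $\|\nabla P_t\|_{\infty\to\infty}\lesssim t^{-1/2}$ is integrable against $e^{-\lambda t}$ with arbitrarily small integral when $\lambda$ is large, I expect $\|\nabla U\|_{\infty}\le\tfrac12$ once $\lambda=\lambda(d,\|b\|_{\alpha,T})$ is chosen large enough.

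Next I would set $\Gamma_t(x):=x+U(t,x)$, so that each $\Gamma_t$ is a $C^{2+\alpha'}$-diffeomorphism of $\R^{d}$, uniformly nondegenerate, with $\Gamma_t-\mathrm{id}$ bounded and inverse $\Lambda_t$ of the same regularity. Applying It\^o's formula to $Z_{s,t}:=\Gamma_t(X_{s,t})$ --- which is legitimate because $U(t,\cdot)\in C^{2}$ and $t\mapsto U(t,\cdot)$ is $C^{1}$ --- and using the PDE to cancel the singular part of the drift, equation \eqref{sde3} should turn into
\begin{equation*}
dZ_{s,t}=\lambda U\big(t,\Lambda_t(Z_{s,t})\big)\,dt+\big(I+\nabla U(t,\Lambda_t(Z_{s,t}))\big)\,dB_t ,
\end{equation*}
whose coefficients lie in $L^{\infty}([0,T],C_b^{1+\alpha'})$ with uniformly elliptic diffusion matrix, and conversely any solution of this equation yields, through $\Lambda_t$, a solution of \eqref{sde3}. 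Since the transformed coefficients are in particular globally Lipschitz in space, classical theory gives a unique strong solution, which is (a); Kunita's theorem then furnishes a stochastic flow of $C^{1+\alpha''}$-diffeomorphisms $\eta_{s,t}$ for every $\alpha''<\alpha'$, and $\phi_{s,t}:=\Lambda_t\circ\eta_{s,t}\circ\Gamma_s$ is the required flow for \eqref{sde3}; composing with the fixed $C^{2+\alpha'}$ maps $\Gamma_s,\Lambda_t$ preserves $C^{1+\alpha'}$-regularity, which is (b), and the backward equation \eqref{itoassBac} for $\psi_{s,t}=\phi_{s,t}^{-1}$ comes out of the same transformation applied to the time-reversed flow.

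For (c) I would first push the convergence $b^{n}\to b$ through the auxiliary PDE: applying the linear Schauder estimate to $U^{n}-U$ gives $U^{n}\to U$ in $L^{\infty}([0,T],C_b^{2+\alpha''})$, hence $\Gamma^{n}\to\Gamma$ and $\Lambda^{n}\to\Lambda$ with uniform $C^{1+\alpha''}$-bounds and locally uniform convergence, and the transformed coefficients converge in $L^{\infty}([0,T],C_b^{1})$. For a sequence of SDEs with uniformly Lipschitz coefficients converging in this sense, the analogues of \eqref{es1}--\eqref{es3} for the flows $\eta^{n}$ are standard: Gr\"onwall applied to $\eta^{n}-\eta$ and to the variational (matrix) equation for $D\eta^{n}-D\eta$, together with the Burkholder--Davis--Gundy inequality and the uniform moment bounds. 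Writing $\phi^{n}_{s,r}=\Lambda^{n}_{r}\circ\eta^{n}_{s,r}\circ\Gamma^{n}_{s}$ and $D\phi^{n}_{s,r}=D\Lambda^{n}_{r}(\eta^{n}_{s,r})\,D\eta^{n}_{s,r}\,D\Gamma^{n}_{s}$, I would then transfer these convergences back, using the uniform $C^{1}$-bounds and uniform convergence of $\Gamma^{n},\Lambda^{n}$ together with the uniform continuity of $D\Lambda$ to control $D\Lambda^{n}_{r}(\eta^{n}_{s,r})-D\Lambda_{r}(\eta_{s,r})$; this gives \eqref{es1}--\eqref{es3}.

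The hard part will be the first two steps: establishing the Schauder regularity $U\in C^{2+\alpha'}$ in space \emph{together with} the smallness $\|\nabla U\|_{\infty}\le\tfrac12$ for large $\lambda$, and checking carefully that the It\^o change of variables rewrites \eqref{sde3} as a well-posed equation with $C^{1+\alpha'}$ coefficients. Once this reduction is secured, (a) is immediate, (b) is Kunita's flow theorem, and (c) is a routine if lengthy Gr\"onwall/Burkholder--Davis--Gundy argument, the only real care being to propagate every estimate through the changes of variables $\Gamma^{n},\Lambda^{n}$.
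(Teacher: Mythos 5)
The paper does not prove this theorem; it is quoted verbatim (as ``important results'') from Flandoli--Gubinelli--Priola \cite{FGP2}, and your outline reconstructs precisely their argument: the It\^o--Tanaka/Zvonkin transformation via the backward parabolic system, Schauder estimates with $\lambda$ large to make $\|\nabla U\|_\infty$ small, Kunita's flow theorem for the transformed $C^{1+\alpha'}$-coefficient SDE, and Gr\"onwall/Burkholder--Davis--Gundy stability pushed back through $\Gamma^n,\Lambda^n$. This is essentially the same approach as the proof the paper relies on; the only point deserving extra care (and handled in \cite{FGP2} by mollification) is that $\partial_t U$ is merely $L^\infty$ in time when $b$ is, so It\^o's formula must be applied in an approximate/generalized form.
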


\begin{remark}
The same results are valid for the backward flows $\psi_{s,t}^{n}$ and  $\psi_{s,t}$ since 
are solutions of the same SDE driven by the drifts $-b_{n}$ and $-b$.
\end{remark}

\section{Result}

 Let $\{\rho_n\}_n$ be a family of standard symmetric mollifiers.
 We define the family of regularized initial data  as $u_{0}^{n}(x) = (u \ast \rho_\varepsilon) (x) $. Let $T>0$. Now, we  assume that 
for all $n$ there exist   $u^{n}$ a  classical  solution in  $[0,T]\times \R^{d}$ of 
 
\begin{equation}\label{eq-abstreg}
\begin{cases}
du^{n}(t) + Au^{n}(t)\ dt= B(u^{n}(t)) \ dt, & t>0 
\\ u(0)=u_{0}^{n}. 
\end{cases}
\end{equation}

\begin{theorem} \label{T1}  We fix $T>0$ and  we assume $u_{0}\in H^{s}(\R^{d})$. Let be $u \in  C([0,T], H^{s}(\R^{d}))$ a local  mild solution  with  $s> \frac{d}{2}$ such that $u^{n}$ converge to $u$ in $C([0,T], H^{s}(\R^{d}))$. Then we have 
 that 

\begin{equation}\label{repr2}
u(t,x)= \mathbb{P}  \E[   (\nabla X_{t}^{-1})^{\ast}  u_{0}(X_{t}^{-1}) ]. 
\end{equation}
\end{theorem}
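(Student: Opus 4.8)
The plan is to transfer the Constantin--Iyer identity from the smooth approximations $u^n$ to the mild solution $u$ by a stability-of-flows argument. Since each $u^n$ is a classical solution of \eqref{eq-abstreg}, the forward implication of the Constantin--Iyer theorem \cite{Cont} gives
\[
u^n(t,x)=\mathbb{P}\,\E\big[(\nabla (X^n_t)^{-1}(x))^{\ast}\,u_0^n\big((X^n_t)^{-1}(x)\big)\big],
\]
where $X^n$ solves \eqref{itoass} with $u$ replaced by $u^n$. Put $b^n:=u^n$, $b:=u$. Because $s>d/2$, the Morrey embedding $H^s(\R^d)\hookrightarrow C_b^{\alpha}(\R^d)$ holds for any $\alpha\in(0,\min\{1,s-d/2\})$, so (after freezing in time on $[T,\infty)$) $b^n,b\in L^\infty([0,\infty),C_b^{\alpha}(\R^d))$ with $\sup_n\|b^n\|_{\alpha,T}<\infty$ and $\|b^n-b\|_{\alpha,T}\to0$. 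Hence Theorem \ref{difH} applies to both $b^n$ and $b$: it furnishes the flows $\phi^n,\phi$ and their inverses $\psi^n,\psi$, and by the Remark the backward flows satisfy \eqref{es1}, \eqref{es2}, \eqref{es3} with $\psi^n\to\psi$. In this notation $(X^n_t)^{-1}=\psi^n_{0,t}$, $\nabla (X^n_t)^{-1}=D\psi^n_{0,t}$, and likewise for $u$.

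First I would pass to the limit in the integrand. Fix $t\in[0,T]$ and write
\[
\begin{aligned}
(D\psi^n_{0,t})^{\ast}u_0^n(\psi^n_{0,t})-(D\psi_{0,t})^{\ast}u_0(\psi_{0,t})
&=(D\psi^n_{0,t})^{\ast}(u_0^n-u_0)(\psi^n_{0,t})\\
&\quad+(D\psi^n_{0,t})^{\ast}\big(u_0(\psi^n_{0,t})-u_0(\psi_{0,t})\big)\\
&\quad+(D\psi^n_{0,t}-D\psi_{0,t})^{\ast}u_0(\psi_{0,t}).
\end{aligned}
\]
Taking $\E|\cdot|$, the first term is $\le\|u_0^n-u_0\|_{\infty}\,\sup_x\E|D\psi^n_{0,t}(x)|\le C\|u_0^n-u_0\|_{H^s}\to0$ by \eqref{es2} and $H^s\hookrightarrow C_b$; the third is $\le\|u_0\|_{\infty}\,\sup_x\E|D\psi^n_{0,t}(x)-D\psi_{0,t}(x)|\to0$ by \eqref{es3}; and the second, using Cauchy--Schwarz in $\omega$, the global Hölder bound $|u_0(a)-u_0(b)|\le C|a-b|^{\theta}$ for some $\theta\in(0,1]$ (valid since $u_0\in H^s$), Jensen, and \eqref{es1}--\eqref{es2}, is $\le C\,\sup_x\big(\E|\psi^n_{0,t}(x)-\psi_{0,t}(x)|^{2}\big)^{\theta/2}\to0$. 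Consequently
\[
g_n(t,x):=\E\big[(D\psi^n_{0,t}(x))^{\ast}u_0^n(\psi^n_{0,t}(x))\big]\longrightarrow g(t,x):=\E\big[(D\psi_{0,t}(x))^{\ast}u_0(\psi_{0,t}(x))\big]
\]
for every $x\in\R^d$, as $n\to\infty$.

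To carry the nonlocal projection $\mathbb{P}$ through the limit I would use incompressibility. Since $\mathrm{div}\,u^n=0$, the flow $X^n_t$ is volume preserving (the Jacobian of an additive-noise SDE flow obeys $dJ_t=J_t\,\mathrm{div}\,b\,dt$); hence, substituting $y=\psi^n_{0,t}(x)$, using $D\psi^n_{0,t}(\phi^n_{0,t}(y))=(D\phi^n_{0,t}(y))^{-1}$, the cofactor bound $|(D\phi^n_{0,t})^{-1}|\le C_d|D\phi^n_{0,t}|^{d-1}$ (again $\det\equiv1$), and \eqref{es2} with $p=2(d-1)$,
\[
\|g_n(t)\|_{L^2(\R^d)}^{2}\le\E\!\int_{\R^d}|D\psi^n_{0,t}(x)|^{2}\,|u_0^n(\psi^n_{0,t}(x))|^{2}\,dx\le C_d\Big(\sup_y\E|D\phi^n_{0,t}(y)|^{2(d-1)}\Big)\|u_0^n\|_{L^2}^{2}\le C
\]
uniformly in $n$. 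Being bounded in $L^2(\R^d)$ and convergent pointwise, $g_n(t)\rightharpoonup g(t)$ weakly in $L^2(\R^d)$; as $\mathbb{P}$ is bounded on $L^2(\R^d)$, $u^n(t)=\mathbb{P}g_n(t)\rightharpoonup\mathbb{P}g(t)$ weakly, whereas $u^n\to u$ in $C([0,T],H^s(\R^d))$ forces $u^n(t)\to u(t)$ strongly in $L^2$. By uniqueness of weak limits $u(t)=\mathbb{P}g(t)$ for every $t\in[0,T]$, which is \eqref{repr2}.

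The main obstacle is the simultaneous limit passage in the nonlinear object $(\nabla X^{-1})^{\ast}u_0(X^{-1})$ and in the nonlocal projection $\mathbb{P}$: one has to combine the convergence of the backward flows, the uniform control of their first derivatives, and the merely Hölder regularity of $u_0$, and then --- this is the delicate point --- upgrade the pointwise convergence of $g_n$ to weak $L^2$ convergence, which is exactly what the uniform $L^2$ bound coming from incompressibility provides, so that $\mathbb{P}$ commutes with the limit. A lesser technical point is to check that ``classical solution'' carries enough spatial decay for the Constantin--Iyer identity and the change of variables above to hold literally; if not, an auxiliary spatial cut-off that does not affect the $H^s$-limits removes the difficulty.
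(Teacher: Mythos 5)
Your proposal is correct and follows essentially the same route as the paper: the Constantin--Iyer identity for the classical approximations $u^{n}$, the embedding $H^{s}\hookrightarrow C_{b}^{\alpha}$ feeding the hypotheses of Theorem \ref{difH}, and the identical three-term splitting of $(\nabla Y_{t}^{n})^{\ast}u_{0}^{n}(Y_{t}^{n})-(\nabla Y_{t})^{\ast}u_{0}(Y_{t})$. The only divergence is the endgame: the paper proves strong convergence of $\E[(\nabla Y_{t}^{n})^{\ast}u_{0}^{n}(Y_{t}^{n})]$ in $C([0,T],L^{2}(\R^{d}))$ and then applies $\mathbb{P}$, whereas you settle for pointwise convergence plus a uniform $L^{2}$ bound and identify the weak limit; both are valid, and your explicit volume-preservation/change-of-variables argument is in fact the cleaner justification of the $L^{2}$ estimates that the paper uses only implicitly (e.g.\ in equating $\int\E|u_{0}^{n}(Y_{t}^{n})-u_{0}(Y_{t}^{n})|^{2}\,dx$ with $\int\E|u_{0}^{n}-u_{0}|^{2}\,dx$).
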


\begin{proof} {\it Step 1 : Regular initial data.}

  By It\^o formula or by  Constantin-Iyer \cite{Cont2} we have

\[
u^{n}(t,x)= \mathbb{P}  \E[   (\nabla Y_{t}^{n})^{\ast}  u_{0}^{n}(Y_{t}^{n}) ]. 
\]

where $Y_{t}^{n}$ is the inverse of 

\[
dX_{t}^{n}= u^{n}(t,X_{t}^{n}) dt +  d B_{t}, \ X_{0}=x\in\R^{d}.
\]

\bigskip

{\it Step 2 : Convergence II.}
\bigskip

 From  $H^{s}(\R^{d})\subset C_{b}^{\alpha}(\R^{d})$ with $\alpha=s-\frac{d}{2}$, 
 hypothesis  and theorem \ref{difH} we have 

\begin{equation}\label{c1}
\lim_{n\rightarrow \infty} \sup_{x\in \R^{d}} \E [\sup_{t\in[0,T]}|Y^{n}_{t}-Y_{t}|^{p}]=0,
\end{equation}

\begin{equation}\label{c3}
\lim_{n\rightarrow \infty} \sup_{x\in \R^{d}}  \E [\sup_{t\in[0,T]}|DY_{t}^{n}-DY_{t}|^{p}]=0,
\end{equation}

\begin{equation}\label{c4}
 \sup_{x\in \R^{d}}  \E [\sup_{t\in[0,T]}|DY_{t}^{n}|^{p}]< \infty,
\end{equation}

where $Y_{t}$ is the inverse of $X_{t}$ and  it  verifies (\ref{sde3})  with drift $u(t,x)$.

\bigskip

 {\it Step 3 : Convergence III.}
\bigskip

We observe  that

\begin{align*}
&  \E[   (\nabla Y_{t}^{n})^{\ast}  u_{0}^{n}(Y_{t}^{n}) ]- \E[   (\nabla Y_{t})^{\ast}  u_{0}(Y_{t}) ]| 
\\ & 
\leq\E[   (\nabla Y_{t}^{n})^{\ast}  u_{0}^{n}(Y_{t}^{n})-(\nabla Y_{t}^{n})^{\ast}  u_{0}(Y_{t}^{n})] |  
\\ & 
+  \ \E[   (\nabla Y_{t}^{n})^{\ast}  u_{0}(Y_{t}^{n})-(\nabla Y_{t}^{n})^{\ast}  u_{0}(Y_{t})] | 
\\ & 
+ \ \E[   (\nabla Y_{t}^{n})^{\ast}  u_{0}(Y_{t})-(\nabla Y_{t})^{\ast}  u_{0}(Y_{t})] | \   
\\ & = I_{1}+ I_{2} + I_{3}. 
\end{align*}

\bigskip

  By H\"{o}lder inequality and  (\ref{c4}) we have

\begin{align*}
&  \int_{\R^{d}} |I_{1}|^{2} \ dx \\ &
 \leq 
\int_{\R^{d}} \E  |\nabla Y_{t}^{n}|^{2}  \   \E |u_{0}^{n}(Y_{t}^{n})- u_{0}(Y_{t}^{n})|^{2}   \ dx  
\\ & \leq \sup_{x,t} \E  |\nabla Y_{t}^{n}|^{2}  \  \int_{\R^{d}} \E |u_{0}^{n}(Y_{t}^{n})- u_{0}(Y_{t}^{n})|^{2}   \ dx  
\\ &  = C \  \int_{\R^{d}} \E |u_{0}^{n}(x)- u_{0}(x)|^{2}   \ dx  
\end{align*}

it follows that $I_{1} \rightarrow 0 $ in $C([0,T], L^2({\R^{d}}))$.

\bigskip

By  H\"{o}lder inequality and   (\ref{c4})  we obtain

\begin{align*}
& \int_{\R^{d}}  |I_{2}|^{2} \ dx  \\ & \leq  \ \int_{\R^{d}} \E   |\nabla Y_{t}^{n}|^{2} \   \E |u_{0}(Y_{t}^{n})-  u_{0}(Y_{t}) |^{2} \   dx   \\ & \leq \sup_{x,t} \E |\nabla Y_{t}^{n}|^{2} \  \int_{\R^{d}}\E |u_{0}(Y_{t}^{n})-  u_{0}(Y_{t}) |^{2} \   dx, 
\end{align*}

  from   (\ref{c1}) and dominated convergence we get that $I_{2}\rightarrow 0 $ in $C([0,T], L^2({\R^{d}}))$.

\bigskip

We observe that

\begin{align*}
&  \int_{\R^{d}} |I_{3}|^{2} \ dx \\ &
\leq \  \int_{\R^{d}} \E| (\nabla Y_{t}^{n})^{\ast}- (\nabla Y_{t})^{\ast}|^{2} \E |u_{0}(Y_{t}) |^{2} \   dx 
\\ & \leq C  \sup_{x,t}\E| (\nabla Y_{t}^{n})^{\ast}- (\nabla Y_{t})^{\ast}|^{2} \ dx 
\end{align*}

from (\ref{c3})  we deduce that  $I_{3}\rightarrow 0 $ in $C([0,T], L^2({\R^{d}}))$.

\bigskip

Thus  we conclude that  $ \E[ (\nabla Y_{t}^{n})^{\ast} u_{0}^{n}(Y_{t}^{n})]\rightarrow  \E [(\nabla Y_{t})^{\ast}  u_{0}(Y_{t})]$ strong in $C([0,T], L^2({\R^{d}}))$. This  implies that
 $u^{n}= \mathbb{P}  \E [(\nabla Y_{t}^{n})^{\ast}  u_{0}^{n}(Y_{t}^{n})]$converge to $ \mathbb{P}  \E [(\nabla Y_{t})^{\ast}  u_{0}(Y_{t})]$  in $C([0,T], L^{2}(\R^{d}))$.

\bigskip

{\it Step 4 : Conclusion.}

\bigskip
From  step I and hypothesis  we conclude that $u(t,x)=  \mathbb{P} \E [(\nabla Y_{t})^{\ast} u_{0}(Y_{t})]$.

\end{proof}

 We observed that by Kato construction of the mild solution we can take $T$ sufficiently small such that 
there exists an unique mild solution  in $C(0,T, ;H^{s}(\R^{d}))$ with initial conditions 
$u_{0}$ and $u_{0}^{n}$, see for instance  \cite{Lema}.

\begin{corollary} We assume $u_{0}\in H^{s}(\R^{d})$ and $T$ is small enough. Let be $u \in  C([0,T], H^{s}(\R^{d}))$ the unique local mild solution 
with  $s> \frac{d}{2}$. Then we have 
 that 

\begin{equation}\label{repr3}
u(t,x)= \mathbb{P}  \E[   (\nabla X_{t}^{-1})^{\ast}  u_{0}(X_{t}^{-1}) ]. 
\end{equation}
\end{corollary}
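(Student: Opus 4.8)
The plan is to derive the corollary as an immediate consequence of Theorem \ref{T1} by verifying its hypotheses in the small-time regime. The only thing to check is that, for $T$ small enough, the regularized problems \eqref{eq-abstreg} admit classical solutions $u^n$ on $[0,T]\times\R^d$ and that these converge to the unique mild solution $u$ in $C([0,T],H^s(\R^d))$; once this is established, Theorem \ref{T1} applies verbatim and yields \eqref{repr3}.

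First I would invoke the Kato--Fujita fixed point construction (as recalled after the proof of Theorem \ref{T1}, cf.\ \cite{Lema}): since $u_0\in H^s(\R^d)$ with $s>\tfrac d2$, there is $T=T(\|u_0\|_{H^s})>0$ and a unique mild solution $u\in C([0,T],H^s(\R^d))$; moreover the fixed point argument is stable in the initial datum, so the same $T$ works uniformly for all $u_0^n=u_0\ast\rho_n$ because $\|u_0^n\|_{H^s}\le \|u_0\|_{H^s}$, producing mild solutions $u^n\in C([0,T],H^s(\R^d))$. Second, I would upgrade each $u^n$ to a classical solution: the mollified datum $u_0^n$ is smooth with all derivatives in $H^s$, and by the same contraction run in the higher spaces $H^{s+k}$ (whose local existence time is controlled by $\|u_0^n\|_{H^{s+k}}$, which may shrink with $n$ but stays positive for fixed $n$) together with parabolic smoothing from \eqref{stimaSpr}--\eqref{stimaASp}, the solution $u^n$ lies in $C([0,T],H^{s'}(\R^d))$ for every $s'$, hence is smooth in $x$ and, reading \eqref{eq-abstreg}, smooth in $t$ on $(0,T]$ — in particular a classical solution. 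Third, the convergence $u^n\to u$ in $C([0,T],H^s(\R^d))$ follows from the Lipschitz dependence of the fixed point on the initial datum: subtracting the integral equations \eqref{mild} for $u^n$ and $u$, using the algebra estimate $\|fg\|_{H^s}\le M_s\|f\|_{H^s}\|g\|_{H^s}$ on the bilinear term $B$, the semigroup bounds, and absorbing the small factor coming from the short time interval, one gets $\|u^n-u\|_{C([0,T],H^s)}\le C\,\|u_0^n-u_0\|_{H^s}$, and the right side tends to $0$ by standard mollifier properties. Finally, with all hypotheses of Theorem \ref{T1} in hand, I would simply apply it to conclude \eqref{repr3}, noting that $X_t^{-1}=Y_t$ solves \eqref{itoassBac} with drift $u$.

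The main obstacle is the second step: turning the mild solutions $u^n$ into genuine \emph{classical} solutions on the \emph{full} interval $[0,T]$ rather than on a possibly shorter, $n$-dependent interval. The subtlety is that higher Sobolev norms of $u_0^n$ blow up as $n\to\infty$, so the naive higher-regularity existence time degenerates; the remedy is that once one has the $H^s$ solution on $[0,T]$ with $s>\tfrac d2$ (so $u^n$ is already bounded and Lipschitz there), a bootstrap using the smoothing estimates \eqref{stimaASp} propagates regularity on the \emph{same} interval $[0,T]$ without requiring a new existence time — only the $n$-uniform $H^s$ bound matters for the qualitative statement "classical on $[0,T]$", while the quantitative control we need from Theorem \ref{T1} is just the $H^s$-convergence, which we already have. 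Everything else is routine semigroup and mollifier bookkeeping.
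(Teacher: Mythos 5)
Your proposal is correct and follows essentially the same route as the paper: the heart of both arguments is the Lipschitz stability estimate obtained by subtracting the mild formulations of $u$ and $u^{n}$, using the $H^{s}$ algebra property and the smoothing bound \eqref{stimaASp}, and absorbing the Duhamel term under the smallness condition $M\,T^{1/2}\|u_{0}\|_{H^{s}}<1$, which yields $\sup_{t\in[0,T]}\|u(t)-u^{n}(t)\|_{H^{s}}\le C\|u_{0}-u_{0}^{n}\|_{H^{s}}\to 0$ and lets Theorem \ref{T1} conclude. The only difference is that you additionally justify that the $u^{n}$ are genuinely classical solutions on the whole interval $[0,T]$ via a parabolic bootstrap, a point the paper does not argue but simply covers by the standing assumption stated before \eqref{eq-abstreg}.
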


\begin{proof}

\bigskip
  Now, we consider $u^{n}$ the unique local  mild solution of 
 
\begin{equation}\label{eq-abstreg2}
\begin{cases}
du^{n}(t) + Au^{n}(t)\ dt= B(u^{n}(t)) \ dt, & t>0 
\\ u(0)=u_{0}^{n}. 
\end{cases}
\end{equation}

in $[0,T]$.  We have

\[
 u(t)- u^{n}(t)=u_{0}- u_{0}^{n} +  \ \int_{0}^{t} S(t-s) \big(B(u(s))-B( u^{n}(s)) \big)  \ ds .  
\]
 
By classical estimations we obtain

\begin{align*}
& \| u(t)-\ u^{n}(t)\|_{H_{s}} \\ &  \leq \|u_{0}- u_{0}^{n}\|_{H_{s}}  \\ &
+   \int_{0}^{t} \|S(t-s) \big(B(u(s))-B( u^{n}(s)) \big)\|_{H_{s}}  \ ds 
\\ & \le \|u_{0}- u_{0}^{n}\|_{H_{s}}  \\ &  + \int_{0}^{t} \frac{M}{(t-s)^{\frac{1}{2}}} (\| u^{n}(s)\|_{H_{s}}+ \| u(s)\|_{H_{s}})  \|u(s)-u^{n}(s)\|_{H_{s}}  \ ds.   
\end{align*}

It is well know that

\[
\| u(t)\|_{H^{s}}\leq M \|u_0\|_{H^{s}}
\]

and 

\[
\| u^{n}(t)\|_{H^{s}}\leq M \|u_0^{n}\|_{H^{s}}\leq  M\|u_0\|_{H^{s}}. 
\]

Thus we have 

\begin{align*}
& \sup_{t\in [0,T]}\| u(t)-\ u^{n}(t)\|_{H_{s}}\\ &
 \leq \|u_{0}- u_{0}^{n}\|_{H_{s}}  M  T^{\frac{1}{2}}  (\sup_{t\in [0,T]}\| u^{n}(t)\|_{H_{s}}+ 
 \sup_{t\in [0,T]}\| u(t)\|_{H_{s}})  \\ &  \times \sup_{t\in [0,T]} \|u(s)-u^{n}(s)\|_{H_{s}}
\\ & \leq \|u_{0}- u_{0}^{n}\|_{H_{s}}
 + M  T^{\frac{1}{2}} \| u_{0}\|_{H_{s}} \sup_{t\in [0,T]} \|u(s)-u^{n}(s)\|_{H_{s}}.
\end{align*}

If  $M  T^{\frac{1}{2}} \| u_{0}\|_{H_{s}} < 1$ we get

\[
\sup_{t\in [0,T]}\| u(t)-\ u^{n}(t)\|_{H_{s}}\leq C  \|u_{0}- u_{0}^{n}\|_{H_{s}}. 
\]

Then we  deduce 

\[
\sup_{t\in [0,T]}\| u(t)-\ u^{n}(t)\|_{H_{s}}\rightarrow 0 \ as  \ n\rightarrow\infty. 
\]

The representation (\ref{repr3})  we follow from the 
theorem \ref{T1}. 

\end{proof}

%%%%%%%%%%%%%%%%%%%%%%%%
\section*{Acknowledgements}
%%%%%%%%%%%%%%%%%%%%%%%%

    Christian Olivera  is partially supported by FAPESP 
		by the grants 2017/17670-0 and 2015/07278-0 . Also supported by CNPq by the grant
		426747/2018-6. 

%%%%%%%%%%%%%%%%%


\begin{thebibliography}{9999}
%%%%%%%%%%%%%%%%%%

\bibitem{Albe}
S. Albeverio, Y. Belopolskaya,  {\it Generalized solutions of the Cauchy problem for the Navier-Stokes system and diffusion processes},
Cubo, 12, 2,  2010.



\bibitem{Bu}
 B. Busnello, {\it A probabilistic approach to the two dimensional Navier-Stokes equations},
Ann. Probab. 27, 1750-1780, 1999.


\bibitem{Bu2}
 B. Busnello, F. Flandoli and M. Romito, {\it A probabilistic representation for the
vorticity of a $3D$ viscous fluid and for general systems of parabolic equations}, Proc.
Edinb. Math. Soc. 48, 295-336, 2005.



\bibitem{Cip}
F Cipriano, AB Cruzeiro, {\it  Navier-Stokes Equation and Diffusions on the Group of Homeomorphisms of the Torus}, 
Communications in Mathematical Physics, 275,  255-269, 2007.            

\bibitem{Cont2}
P Constantin, G Iyer, {\it A stochastic Lagrangian representation of the three-dimensional incompressible Navier-Stokes equations
 }, Ann. Appl. Probab, 21,  1466-1492, 2011.

 


\bibitem{Cont}
P Constantin, G Iyer,  {\it A stochastic-Lagrangian approach to the Navier-Stokes equations in domains with boundary
}  Communications on Pure and Applied, 61,  330-345, 2008. 

\bibitem{Che}
 J. M. Chemin, {\it Remarques sur lexistence globale pour le syst`eme de
Navier-Stokes incompressible},  SIAM J. Math. Anal., 23, 20-28, 1992.


\bibitem{Cru}
AB Cruzeiro, E Shamarova, {\it Navier-Stokes equations and forward-backward SDEs on the group of diffeomorphisms of a torus
},   Stochastic processes and their applications, 119, 4034-4060, 2009. 

\bibitem{Fang}
S Fang, D Luo,  {\it Constantin and Iyer's Representation Formula for the Navier-Stokes Equations on Manifolds
},  Potential Analysis, 48,  181-206, 2018. 



\bibitem{FGP2}
 F. Flandoli, M. Gubinelli, E. Priola, {\it Well-posedness of the transport equation by stochastic
 perturbation}, Invent. Math., 180, 1-53, 2010.

\bibitem{FGP}
 F. Flandoli, M. Gubinelli, E. Priola,  {\it Flow of diffeomorphisms for SDEs with unbounded  H\"{o}lder continuous drift }, 
Bulletin des Sciences Mathematiques,  134,  405--422, 2010. 


\bibitem{Fabes}
E. B. Fabes, B. F. Jones, N. M. Riviere, {\it The initial value problem for the Navier-Stokes equations with data in $L_p$
}, Archive for Rational Mechanics and Analysis, 45, 222-240, 1972.  


\bibitem{Kato3}
 H.Fujita,  T. Kato, \textit{On the Navier-Stokes initial value problem I}, Arch. Rational Mech. Anal. 16, 269-315, 1964.


\bibitem{Kato2}
 H.Fujita,  T.  Kato, \textit{ On the non-stationary Navier-Stokes system,
}, Rend. Sem. Mat. Univ. Padova, 32, 243-260, 1962.





\bibitem{Giga2}
 Y. Giga:  \textit{ Solutions for semilinear parabolic equations in $L_p$ and regularity of weak solutions of the Navier-Stokes system},  J. Differential Equations 62, 186-212, 1986.

\bibitem{Iyer}
 G. Iyer,  {\it A stochastic Lagrangian formulation of the Navier-Stokes and related transportequations},  Ph. D. Thesis, University of Chicago, 2006.

\bibitem{Kara}  I. Karatzas, S. Shreve, {\it Brownian Motion and Stochastic Calculus.},
 New York, Springer-Verlag, 1988.

\bibitem{Krylov}
N.V. Krylov, M.  R\"{o}ckner, {\it  Strong solutions to stochastic equations with singular time dependent
drift},  Probab. Theory Relat. Fields,  131, 154-196, 2005. 


\bibitem{Lema}
 P.G. Lemarie-Rieusset, {\it Recent developments in the Navier-Stokes problem}, Chapman  Hall/CRC, 2002. 


\bibitem{Soh}H. Sohr, 
{\it The Navier-Stokes equations. An elementary functional analytic approach},
Birkh\"auser Advanced Texts, Birkh\"auser Verlag, Basel, 2001. 


\bibitem{Reza}
F Rezakhanlou, {\it  Stochastically symplectic maps and their applications to the Navier-Stokes equation} ,
	Annales de l'Institut Henri Poincare (C) Non Linear,  33,  1-22, 2016. 

\bibitem{Tema}
 Temam, R., {\it Navier-Stokes equations and nonlinear functional analysis}, Second edition.
CBMS-NSF Regional Conference Series in Applied Mathematics, 66. Society for Industrial
and Applied Mathematics (SIAM), Philadelphia, PA, 1995.



\bibitem{Ve}
A. Veretennikov, {\it Strong solutions and explicit formulas for solutions of stochastic integral equations} 
(Russian). Mat Sb (N.S.) 111, 434-452, 1980. 

\bibitem{Zhang}
X. Zhang, {\it Stochastic Lagrangian particle approach to fractal Navier-Stokes equations}, 
Communications in Mathematical Physics, 311, 133-155, 2012.  


\bibitem{Zhang3}
X. Zhang, {\it  A stochastic representation for backward incompressible Navier-Stokes equations,},  
Prob. Theory and Rela. Fields, 148,  305-332, 2010.

\bibitem{Zo}
A. K. Zvonkin ,  {\it A transformation of the phase space of a diffusion process that will remove the drift}
(Russian). Mat Sb (N.S.) 93, 129-149, 152, 1974. 

\end{thebibliography}
\end{document}